\newtheorem{te}{Theorem}[section]
\numberwithin{equation}{section}
\begin{document}
	
	\title[]{A note on differential equations of logistic type}
	
	\author{G. Dattoli$^1$}
	\address{$^1$ ENEA - Frascati Research Center.}
	
	\author{R. Garra$^2$}
	\address{$^2$ Section of Mathematics, Università Telematica Internazionale}

	\begin{abstract}
		Logistic equations play a pivotal role in the study of any non linear evolution process exhibiting growth and saturation. The interest for the phenomenology, they rule, goes well beyond physical processes and cover many aspects of ecology, population growth, economy…According to such a broad range of applications, there are different forms of functions and distributions which are recognized as generalized logistics.
		Sometimes they are obtained by fitting procedures. Therefore, criteria might be needed to infer the associated  non linear differential equations, useful to guess “hidden” evolution mechanisms. In this article we analyze different forms of logistic functions and use simple means to reconstruct the differential equation they satisfy. Our study includes also differential equations containing non standard forms of derivative operators, like those of the Laguerre type.

		\smallskip
		
		\textit{Keywords:} Logistic functions, Laguerre derivatives, nonlinear differential equations, 
		
	\end{abstract}
	
	\maketitle
	
	\section{Introduction and preliminaries}
	
	The logistic function (hereafter LF) \cite{1}
	\begin{equation}\label{F}
		F(x,r|K) = f_0 \frac{e^{rx}}{1+\frac{f_0}{K}(e^{rx}-1)}
	\end{equation}
	is often used in different branches of science going from population statistics \cite{2}, Ecology \cite{3}, Laser Physics \cite{4}, Economy \cite{5} and tumor mass growth \cite{pagnutti}.
	As is well known, it is the solution of a first order non linear differential equation, which can be directly derived from eq. \eqref{F}. Even though elementary, we detail the derivation, for further convenience. Omitting the argument of the function $F$ for brevity’s sake, we solve \eqref{F} for $e^{rx}$ and set\\
	\begin{equation}\label{F1}
		\begin{cases}
      e^{rx} = \alpha \frac{F}{\frac{F}{K}-1}\\	
	\alpha = \frac{1}{f_0}\left(\frac{f_0}{K}-1\right)
	\end{cases}
    \end{equation}
	Keeping the derivative of both sides of eq. \eqref{F1} and, taking advantage from the fact that the exponential is an eigen-function of the derivative operator, we obtain
	\begin{equation}\label{F2}
	-r\frac{F}{\frac{F}{K}-1} = \frac{F'}{\left(\frac{F}{K}-1\right)^2}
	\end{equation}
	which can eventually be cast in the logistic equation canonical form
	\begin{equation}\label{F3}
		F' = r\left(1-\frac{F}{K}\right)F,
	\end{equation}
where $F' = dF/dx$.
	
	The meaning of eq. \eqref{F3} is clear, it describes a growth process, counteracted by a quadratic reduction of the gain.
	As already noted, the key note in the derivation of the last relationship, is the fact that the exponential function is an eigen-function of the derivative operator. 
	For further convenience, we write eq. \eqref{F} in the equivalent form
	
	\begin{equation}\label{F4}
		\begin{cases}
		\tilde{F}(x,r|\mu) =\frac{F}{K} = \frac{1}{1+\mu e^{-rx}}\\
		\mu= -\alpha K,
		\end{cases}
	\end{equation}
	which is less plethoric than its original form and allows straightforward generalizations, as illustrated below. 
	The LF belongs to the family of sigmoid curves \cite{6}. They are ubiquitous in any aspects of applied science and are often used to interpolate experimental data \cite{7}. The curve fittings provide analytical expressions, often deviating from the "canonical" logistic. The derivation of the differential equations satisfied by these modified forms, may, in principle, shine light on the mechanisms, which have contributed to e.g. the dynamics of a specific growth  process.
	The procedure we have just envisaged can be usefully exploited to derive the differential equations ruling the behavior of other forms of LF’s, like
	\begin{equation}\label{combi}
    \tilde{F}(x,r_1,r_2|\mu) =\frac{1}{1+\mu (e^{-r_1 x}+e^{-r_2 x})}.	
    \end{equation}
    Once applying the same outlined procedure (namely by solving for one of the exponentials)  we obtain a slightly extended version of eq. \eqref{F3}, i.e.
    \begin{equation}	\label{F7}
    \tilde{	F'} = r_1\bigg[1-(1+\Delta e^{-r_2 x})\tilde{F}\bigg]\tilde{F},
    \end{equation}
    where 
    $$\Delta = -\mu\frac{r_2-r_1}{r_1}.$$
    Interpreted as a logistic differential equation, in which the carrying capacity contains an exponential term. If e.g. the growth rates r have not the same sign the evolution process represented by eq. \eqref{F7} contains competing effects which determine a decay (see Fig. 1  and next sections for further comments).
    
    \begin{figure}  
    	\centering
    	\includegraphics{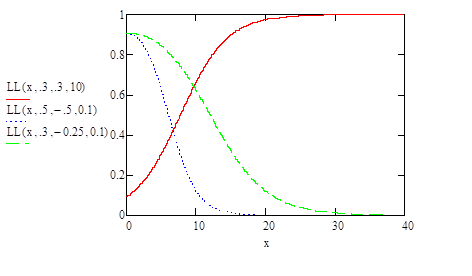}
    	\caption{Plots of the function \eqref{combi} for different values of $r_1$, $r_2$ and $\mu$. It is evident the role played by the sign of the parameters $r_1$ and $r_2$ leading to 
    	a decay or an increasing behaviour with saturation. }
    \end{figure}
    
    This article is devoted to different forms of LF and their associated differential equation, along with an interpretation, whenever possible, of their role in applications.
    
    \section{Generalized Logistic}
    
    The higher order LF is a particular case of the generalized logistic and the example we consider is the fairly simple function reported below (with $n$ not necessarily integer)
    
    \begin{equation}
    \tilde{F}(x,r|\mu) = \bigg[\frac{1}{1+\mu e^{-nrx}}\bigg]^{\frac{1}{n}}.	
    \end{equation}
	A straightforward application of the envisaged procedure, yields, for the associated differential equation, the relationship
    \begin{equation}
    	\tilde{F'} = r\left(1-\tilde{F}^n\right)\tilde{F},
    \end{equation}
     which is the Richard’s growth equation \cite{8}. The more general case (used in forestry to study the plant/forest evolution \cite{9})
     \begin{equation}
     	\sigma  = \alpha + \left(\frac{\lambda}{\chi+\eta e^{-kx}}\right)^{\frac{1}{n}}
     \end{equation}
 is characterized by a similar equation
\begin{equation}
	\sigma' = \frac{k}{n\lambda}\left[\lambda-\chi(\sigma-\alpha)^n\right](\sigma-\alpha).
\end{equation}
It is evident that to get either equations (2.2) and (2.4) we have handled (2.1) and (2.3) to enucleate the exponential and then procede as before. The same criterion can be applied if it is possible to determine, inside the generalized LF, any other function, recognized as eigen-function of a specific differential operator.\\
We consider therefore an LF of the type
\begin{equation}\label{gen}
	Z = \frac{1}{1+\mu P(x)},
\end{equation}
where $P(x)$ is the characteristic of the LF (in brief LC - Logistic Characteristic). The choice of $P(x)$ is determined by specific mathemamatical constraints associated with the specific problem under study.
For example, $P(x)$ should be sufficiently regular, positive and decreasing function. Moreover, a general construction can be suggested by taking $P(x)$ as an eigenfunction 
of a linear differential operator $\widehat{O}_x$, i.e. 
$$\widehat{O}_x P(x) = \delta P(x),$$
where $\delta$ is a real coefficient. \\
Just to give an example, we consider the function $P(x) = \cosh(rx)$ in eq. \eqref{gen}. By taking into account that the hyperbolic cosine is an eigenfunction of the second derivative, we find that the generalized logistic function
$$Z = \frac{1}{1+\mu \cosh(rx)},$$
is a solution for the logistic-type differential equation
\begin{equation}\label{cos}
	Z''-2f(Z)Z' = -r^2(1-Z)Z,
\end{equation}
where
\begin{equation}
\nonumber f(Z) = \frac{d}{dx}\ln(Z).
\end{equation}
The equation \eqref{cos} describes therefore a kind of non-linear damped 
oscillation.\\

A straightforward extension of the previous example is the case with LC 
\begin{equation}\label{expsu}
P(x) = c_1 e^{-r_1 x}+ c_2 e^{-r_2 x}
\end{equation}
which yields to the logistic function
\begin{equation}\label{expsu1}
Z = \frac{1}{1+\mu \left(c_1 e^{-r_1 x}+ c_2 e^{-r_2 x}\right)}.
\end{equation}
The LC \eqref{expsu} satisfies the linear differential equation
\begin{equation}
	\left(\frac{d^2}{dx^2}+(r_1+r_2)\frac{d}{dx}\right)P(x) = -r_1r_2P(x),
\end{equation}
i.e. it is an eigenfunction of the operator 
$$\widehat{O}_x=	\left(\frac{d^2}{dx^2}+(r_1+r_2)\frac{d}{dx}\right).$$
Then, by using the fact that 
$$\mu P(x) = \frac{1-Z}{Z}$$
we have that the $LF$ \eqref{expsu1} satisfies the equation 
\begin{equation}
	\left(\frac{d^2}{dx^2}+(r_1+r_2)\frac{d}{dx}\right)\frac{1-Z}{Z} = -r_1r_2\frac{1-Z}{Z}.
\end{equation}
Concluding, the logistic function \eqref{expsu1} is a solution for the nonlinear logistic-type equation
\begin{equation}\label{cos1}
	Z''+(r_1+r_2)Z'-2f(Z)Z' = -r_1r_2(1-Z)Z,
\end{equation}
where
\begin{equation}
	\nonumber f(Z) = \frac{d}{dx}\ln(Z).
\end{equation}
In the theory of Free Electron Laser, the dynamics of the system is ruled by an LF  whose LC is the solution of a third order ODE \cite{10}, in the final section we discuss how the simple point of view developed so far, applies to this type of problems too.

    \section{The Logistic function and Laguerre derivative}
    
    In the previous section we have envisaged a “paradigm” to derive the associated nonlinear differential equation starting from the logistic-type function (LF). The procedure consists in fixing the differential equation for the LC, which provides the “finger prints” of the linear part of the desired logistic equation.
    In this section we extend our analysis to characteristics functions which are not elementary transcendent, but e.g. higher order transcendent of the Bessel type.
    The concept of Laguerre derivative was introduced within the context of a study aimed at  establishing the properties of multivariable Laguerre polynomials \cite{11}. Successive researches \cite{14,13,12,15} have underscored the possibility of exploiting the underlying concepts in a wider context, involving generalized forms of exponentials \cite{16}.  
   	Let us denote with the symbol $_L\widehat{D}_x$ the Laguerre derivative, i.e.
	$$_L\widehat{D}_x := \frac{d}{dx} x\frac{d}{dx} .$$
Without entering further into the relevant properties, we just remind that the function
	\begin{equation}\label{tric}
		e_0(x)= \sum_{k=0}^\infty \frac{x^k}{k!^2} = I_0(2\sqrt{x}),
	\end{equation}
satisfies the eigenvalue equation (see e.g. \cite{16,17,18})
\begin{equation}\label{eig}
	_L\widehat{D}_x e_0(rx) = r e_0(rx).
\end{equation}
The function \eqref{tric} is known as Laguerre exponential function or  0-th order Tricomi function, while $I_0(\cdot)$ is the 0-th order modified Bessel function of first kind.\\
The eigenvalue equation \eqref{eig} and the analogy between $e_0(x)$ and the exponential function is suggestive of the possible existence of growth models ruled by a \textit{Laguerre derivative dynamics} (see e. g. \cite{ricci2,ricci1}). Even though no statistical evidence seems to support such a possibility, the relevant mathematical study is worth to be pursued, since it provides further tools for the handling of  the growth data.\\ 
 We observe that in the framework of these Laguerre-type Malthusian models, the stretched Laguerre exponential function $N(t)= e_0(\lambda t^\alpha)$ gives the solution for the following generalization of the Korf model (see e.g. \cite{anto})
 \begin{equation}
 \frac{d}{dt} t\frac{dN}{dt}= \lambda \alpha^2 t^{\alpha-1} N(t).
 \end{equation}
 
 Inspired by these studies, we here devolope a different approach. Here we introduce a logistic function obtained by replacting the exponential function with the Tricomi function as LC and then we find the governing equation.
 Indeed, the Laguerre exponential function $e_0(x)$ is a useful generalization of the exponential function (see \cite{ricci1}).
 Therefore, we can consider the following Laguerre-type LF
 \begin{equation}\label{llog}
 	Z = \frac{1}{1+\mu[e_0(\lambda x)]^{-1}}.
 \end{equation}
Note that, obviously, $[e_0(\lambda x)]^{-1}\neq e_0(-x)$.
A comparison between ordinary and Laguerre logistics is provided in Fig. 2. The behavior is similar; a saturation follows the growth which in the latter case, for the same parameter of the first, is smoother.

\begin{figure}  
	\centering
	\includegraphics{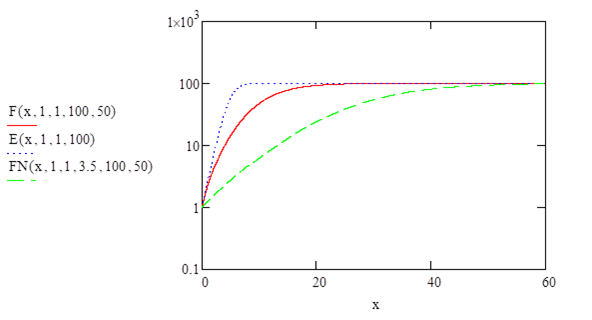}
	\caption{Here we compare the behaviour of the logistic functions:\\
	$$E(x)= \frac{a e^{\lambda x}}{1+\frac{a}{K}(e^{\lambda x}-1)}\equiv \mbox{Classical Logistic function (blue dotted line)}$$
	$$F(x)= \frac{a e_0(\lambda x)}{1+\frac{a}{K}(e_0(\lambda x)-1)}\equiv \mbox{Laguerre Logistic function (red line)}$$
	$$FN(x)= \frac{a e_\nu(\lambda x)\Gamma(\nu+1)}{1+\frac{a}{K}(e_{\nu}(\lambda x)\Gamma(\nu+1)-1)}\equiv \mbox{Laguerre Logistic with $\nu= 3.5$ (green dotted line)}$$
	We have fixed $a = \lambda = 1$ and $K = 100$.
}
\end{figure}
 
	We have the following result
	\begin{te}
The generalized logistic function \eqref{llog} satisfies the nonlinear differential equation
\begin{equation}
	_L\widehat{D}_x Z + f(Z,x) Z' = \lambda Z\left(1-Z\right),
\end{equation}
where 
$$f(Z,x) = -2x\frac{d}{dx}ln\left(1-Z\right).$$
	\end{te}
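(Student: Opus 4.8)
The plan is to run the paper's own paradigm once more: read off the logistic characteristic $P(x)=[e_0(\lambda x)]^{-1}$ from \eqref{llog}, produce a linear ODE for the \emph{relevant} eigenfunction via the eigenvalue relation \eqref{eig} for the Laguerre exponential, and then transport it into an equation for $Z$ through the algebraic dictionary $\mu P(x)=(1-Z)/Z$ already used above. The one genuinely new feature with respect to the previous sections is that \eqref{eig} supplies an eigenvalue equation for $e_0(\lambda x)$, whereas the characteristic appearing in \eqref{llog} is its \emph{reciprocal}; hence the first step I would carry out is to write down the ODE obeyed by $P$ itself.

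To this end I would record a ``reciprocal rule'' for the Laguerre derivative: for any sufficiently smooth, non-vanishing $g$,
\[
{}_L\widehat{D}_x\!\left(\frac1g\right)=\frac{d}{dx}\!\left(x\,\frac{d}{dx}\,\frac1g\right)=-\frac{{}_L\widehat{D}_x g}{g^{2}}+\frac{2x\,(g')^{2}}{g^{3}},
\]
which is immediate: since $\frac{d}{dx}(1/g)=-g'/g^{2}$, applying $\frac{d}{dx}\,x\,(\cdot)$ and using $g'+xg''=(xg')'={}_L\widehat{D}_x g$ gives the claim. Note that, unlike the ordinary second-order operator, ${}_L\widehat{D}_x$ is not a derivation, and the anomalous term $2x(g')^{2}/g^{3}$ is precisely what will later produce the drift $f(Z,x)Z'$. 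Taking $g=e_0(\lambda x)$, invoking \eqref{eig}, and writing $P=1/g$ (so $P'=-g'/g^{2}$, whence $2x(g')^{2}/g^{3}=2x(P')^{2}/P$), I obtain the ``linear-looking'' relation
\[
{}_L\widehat{D}_x P=-\lambda P+\frac{2x\,(P')^{2}}{P}.
\]

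Next I would substitute the logistic dictionary. Setting $u:=\mu P(x)=(1-Z)/Z$, which is a constant multiple of $P$, one has ${}_L\widehat{D}_x u=\mu\,{}_L\widehat{D}_x P$, $u'=\mu P'$ and $(u')^{2}/u=\mu (P')^{2}/P$, so the previous display becomes the $\mu$-free equation ${}_L\widehat{D}_x u=-\lambda u+2x(u')^{2}/u$. Finally I would pass to $Z$ via $u=Z^{-1}-1$, so that $u'=-Z'/Z^{2}$ and, using $(xZ')'={}_L\widehat{D}_x Z$ once more,
\[
{}_L\widehat{D}_x u=-\frac{{}_L\widehat{D}_x Z}{Z^{2}}+\frac{2x(Z')^{2}}{Z^{3}},\qquad \frac{2x(u')^{2}}{u}=\frac{2x(Z')^{2}}{Z^{3}(1-Z)}.
\]
Multiplying the equation for $u$ by $-Z^{2}$ and collecting the two rational contributions over the common denominator $1-Z$ gives
\[
{}_L\widehat{D}_x Z+\frac{2x(Z')^{2}}{1-Z}=\lambda Z(1-Z),
\]
and since $\frac{2x(Z')^{2}}{1-Z}=Z'\left(-2x\,\frac{d}{dx}\ln(1-Z)\right)=f(Z,x)\,Z'$, this is exactly the asserted identity.

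Every computation here is routine; the one step that requires genuine care --- the one I would flag as the main obstacle --- is the failure of the Leibniz rule for ${}_L\widehat{D}_x$. One must not transport the ordinary product/chain rules but instead use the reciprocal identity above, keeping track of the extra term $2x(g')^{2}/g^{3}$; that term is exactly what upgrades the classical logistic equation \eqref{F3} to its Laguerre analogue, endowing it with the additional nonlinear drift $f(Z,x)Z'$.
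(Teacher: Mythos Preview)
Your proof is correct. The paper reaches the same conclusion by a slightly more direct route: rather than working with the characteristic $P=[e_0(\lambda x)]^{-1}$ and applying your reciprocal rule twice, it clears denominators at the outset, writing $Z\cdot G=e_0(\lambda x)$ with $G=e_0(\lambda x)+\mu$, and then applies the Laguerre \emph{product} rule
\[
{}_L\widehat{D}_x(ZG)=G\,{}_L\widehat{D}_x Z+Z\,{}_L\widehat{D}_x G+2x\,Z'G'.
\]
Dividing by $G$ and using $G=\mu/(1-Z)$ (hence $G'/G=Z'/(1-Z)$) yields the claim in one stroke. The two arguments are equivalent in substance --- both hinge on the anomalous $2x(\cdot)(\cdot)$ cross-term that measures the failure of the Leibniz rule for ${}_L\widehat{D}_x$, exactly the point you flagged --- but the paper's product-rule version avoids the intermediate (nonlinear) ODE for $P$ and needs only one application of the rule instead of two, while your version hews more closely to the ``logistic characteristic'' paradigm of Section~2.
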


\begin{proof}
	Starting from \eqref{llog} we have that
	\begin{equation}
Z(x) \cdot \left(e_0(\lambda x)+\mu\right) = e_0(\lambda x).
	\end{equation}
  	 We apply the Laguerre derivative $_L\widehat{D}_x$ 
  	 to both sides, obtaining 
  	 \begin{equation}\label{lo2}
  	 _L\widehat{D}_x \bigg[	Z(x)\left(e_0(\lambda x)+\mu\right)\bigg]= \lambda e_0(\lambda x).	
  	 \end{equation}
    Let us introduce an auxiliary function
    $G(x) = e_0(\lambda x)+\mu$. By using the basic properties of the derivatives we have from \eqref{lo2} 
    \begin{equation}\label{lo1}
 _L\widehat{D}_x[Z(x)G(x)] = G(x) _L\widehat{D}_x Z(x)+Z(x) _L\widehat{D}_x G(x) +2x\frac{d}{dx}G\frac{d}{dx}Z= \lambda e_0(\lambda x).
    \end{equation}
We now observe that
$$_L\widehat{D}_x G = \lambda e_0(\lambda x).$$
Dividing all the terms appearing in \eqref{lo1} by $G$ we have that
\begin{equation}\label{lo3}
	_L\widehat{D}_x Z+\lambda Z^2 +2xZ'\frac{G'}{G}= \lambda Z.
\end{equation}
Finally, observing that
$$e_0(\lambda x)= \frac{\mu Z}{1-Z},$$
we obtain the claimed result by direct calculation.
	\end{proof}

More in general, if we keep into account that the $\nu$-th order Tricomi function
\begin{equation}
	e_\nu(x) = \sum_{r=0}^\infty \frac{x^r}{r!\Gamma(\nu+r+1)}, \quad \nu >-1
\end{equation}
we can construct a generalized LF replacing $e_0(\cdot)$ with $e_\nu(\cdot)$ and we can find the governing equation in a similar way, recalling that
$$_L\widehat{D}_{x,\nu}e_\nu(rx)= re_\nu(rx),$$
	where 
	$$_L\widehat{D}_{x,\nu} := \frac{d}{dx} x\frac{d}{dx} +\nu\frac{d}{dx}.$$
	
In this more general case, by analogy with the previous result, we have the following Theorem.

	\begin{te}
	The generalized logistic function 
	\begin{equation}
			Z_{\nu} = \frac{1}{1+\mu[e_\nu(\lambda x)]^{-1}}
	\end{equation}
	 satisfies the nonlinear differential equation
	\begin{equation}
		_L\widehat{D}_{x,\nu}  Z + f(Z,x) Z' = \lambda Z\left(1-Z\right),
	\end{equation}
	where 
	$$f(Z,x) = -2x\frac{d}{dx}ln\left(1-Z\right).$$
\end{te}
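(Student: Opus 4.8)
The plan is to transcribe, almost verbatim, the argument given for the previous Theorem, the only genuinely new ingredient being a Leibniz-type rule for the generalized Laguerre derivative $_L\widehat{D}_{x,\nu}=\frac{d}{dx}x\frac{d}{dx}+\nu\frac{d}{dx}$. First I would clear denominators in the definition of $Z\equiv Z_\nu$, writing
\begin{equation}
\nonumber Z(x)\left(e_\nu(\lambda x)+\mu\right)=e_\nu(\lambda x),
\end{equation}
and then apply $_L\widehat{D}_{x,\nu}$ to both sides; since $\mu$ is a constant and, by hypothesis, $_L\widehat{D}_{x,\nu}e_\nu(\lambda x)=\lambda e_\nu(\lambda x)$, the right-hand side collapses to $\lambda e_\nu(\lambda x)$.

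The key step is the expansion of the left-hand side. Setting $G(x)=e_\nu(\lambda x)+\mu$, a short computation---in which the first-order piece $\nu\frac{d}{dx}$ obeys the usual Leibniz rule and hence contributes no mixed term---gives
\begin{equation}
\nonumber _L\widehat{D}_{x,\nu}\big[Z\,G\big]=G\,_L\widehat{D}_{x,\nu}Z+Z\,_L\widehat{D}_{x,\nu}G+2x\,Z'\,G',
\end{equation}
which has exactly the same shape as in the case $\nu=0$. Using $_L\widehat{D}_{x,\nu}G=\lambda e_\nu(\lambda x)$, dividing through by $G$, and noting that $e_\nu(\lambda x)/G=Z$ is immediate from the definition of $Z_\nu$, one arrives at
\begin{equation}
\nonumber _L\widehat{D}_{x,\nu}Z+\lambda Z^2+2x\,Z'\,\frac{G'}{G}=\lambda Z.
\end{equation}

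To conclude I would eliminate the Tricomi function in favour of $Z$: from $\mu[e_\nu(\lambda x)]^{-1}=(1-Z)/Z$ one gets $G=\mu/(1-Z)$, hence $G'/G=\frac{d}{dx}\ln G=-\frac{d}{dx}\ln(1-Z)$, so that $2x\,Z'\,G'/G=f(Z,x)Z'$ with $f(Z,x)=-2x\frac{d}{dx}\ln(1-Z)$; rewriting $\lambda Z-\lambda Z^2=\lambda Z(1-Z)$ then yields the stated equation. I do not anticipate any serious difficulty here: the only point that deserves care is the verification of the Leibniz-type identity for $_L\widehat{D}_{x,\nu}$---in particular, checking that the extra term $\nu\frac{d}{dx}$ generates no further mixed contribution---after which the computation is word-for-word the one in the proof of the preceding Theorem.
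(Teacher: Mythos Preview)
Your proposal is correct and follows exactly the route the paper indicates: the paper does not write out a separate proof for this theorem but simply states that it ``can be obtained by calculations in analogy with Theorem~3.1,'' and what you have written is precisely that analogy carried out in full. Your observation that the extra first-order piece $\nu\frac{d}{dx}$ distributes over the product by the ordinary Leibniz rule, so that the mixed term remains $2xZ'G'$ and the identity $_L\widehat{D}_{x,\nu}[ZG]=G\,_L\widehat{D}_{x,\nu}Z+Z\,_L\widehat{D}_{x,\nu}G+2xZ'G'$ has the same shape as in the $\nu=0$ case, is the one point the paper leaves implicit, and you have handled it correctly.
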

The proof can be obtained by calculations in analogy with the Theorem 3.1. The behaviour the generalized logistic function $Z_{\nu}$ is clearly ruled by the additive parameter $\nu$ (see Fig. 2). 
	
\section{Laguerre-type diffusion-reaction equation}

In the previous section we have introduced the Laguerre-Logistic equation, just using an analogy between the 0-th order Tricomi and the exponential function and the concept of Laguerre derivative, which also appear in a generalization of the heat equation. An interesting new topic is the analysis of nonlinear reaction-diffusion equations involving Laguerre deritivatives (see also \cite{tomo}).\\

The Laguerre diffusive equation writes 
\begin{equation}\label{ladif}
		\frac{\partial F}{\partial t} = _L\widehat{D}_x F = \frac{\partial}{\partial x}x\frac{\partial F}{\partial x}.
\end{equation}
Considering an initial value problem under the condition $F(x,0) = g(x)$, where $g(x)$ is a suitable function, a formal operatorial solution can be obtained by using a non-exponential evolution operator and it reads (see \cite{Dattoli})
\begin{equation}
F(x,t)  =e_0(t _L\widehat{D}_x)g(x).
\end{equation}
The interest for \eqref{ladif} or for relevant modified forms is justified by the role that it plays in different physical problems associated with classical optics and accelerator Physics, further comments can be found in \cite{khan,Dattoli} and reference therein. We have already underlined that eq. \eqref{ladif} is a kind of linear diffusion equation, we seek therefore for a generalization of the associated non-linear Burgers-type equation.  Accordingly we start from the Hopf-Cole transformation (see e.g. \cite{sachdev} for an appropriate discussion)
\begin{equation} \label{cole}
	u = \frac{\partial_x F}{F}
\end{equation}
and derive, with the help of \eqref{ladif}, the equation satisfied by $u$.
After keeping the (partial) time derivative of both sides of \eqref{cole} with respect to the variable $t$, we obtain
\begin{equation}\label{cole1}
	\frac{\partial u}{\partial t} = \frac{\displaystyle\left(\frac{\partial}{\partial t}\frac{\partial F}{\partial x}\right)F-\frac{\partial F}{\partial x}\frac{\partial F}{\partial t}}{F^2}.
\end{equation}
Being  independent variables, we can assume that the associated derivatives commute, so that, also on account of eq.\eqref{ladif}, we find
\begin{equation}
\frac{\partial}{\partial t}\frac{\partial F}{\partial x} = \frac{\partial}{\partial x}\left(\frac{\partial}{\partial x}x\frac{\partial F}{\partial x}\right)
\end{equation}
	Thus, going back to \eqref{cole1} and by using the fact that
	$\partial_x F = u F$, we obtain 
	\begin{equation}
			\frac{\partial u}{\partial t} = \frac{1}{F}\left(\frac{\partial}{\partial x}\left(\frac{\partial}{\partial x}(xuF)-u_L\frac{\partial}{\partial x}x\frac{\partial F}{\partial x} \right) \right).
	\end{equation}
Working out the derivatives we eventually end up with
\begin{equation}
	\frac{\partial u}{\partial t}=\frac{\partial}{\partial x}x\frac{\partial u}{\partial x}+\frac{\partial u}{\partial x}+\left(1+x\frac{\partial}{\partial x}\right) u^2
\end{equation}
	which is a non-trivial non-linear reaction-diffusion type equation with known solution, once the explicit solution of the linear Laguerre-type equation \eqref{ladif} is given.
	This useful scheme permits for example to construct interesting explicit solutions for nonlinear partial differential equations by using Laguerre polynomials (as well as in the classical Burgers equation by using the Hermite polyonomials, see \cite{levi}).
	This point will be object of further research.\\
	
	Observe that another interesting generalization of the Burgers equation is given by 
	\begin{equation}
		\frac{\partial u}{\partial t} = \frac{\partial}{\partial x}x\frac{\partial u}{\partial x} +x	\left(\frac{\partial u}{\partial x}\right)^2,
	\end{equation}
that can be linearized and reduced to the equation \eqref{ladif} by means of the transformation $u = \ln F$.

	\section{Final comments}
	
	Before going further it is worth to underline that being (1.4) and (2.2) Riccati-type equations, they can be reduced to linear forms, as indicated below. Regarding eq. (1.4), by taking
	$ F = 1/E$ and $k = r/K$, we obtain the linear equation
	\begin{equation}
	E' = -rE+k.
	\end{equation}
	The initial problem has been accordingly reduced to an elementary first order non homogeneous differential equation, with known analytical solutions also for explicitly time dependent growth rate and carrying capacity. In this case, the use of standard means yields
	\begin{equation}
		E = e^{-\int_0^x r(t')dt'} \left(E_0+\int_0^x e^{\int_0^{t'} r(t'')dt''}k(t')dt'\right),
	\end{equation}
	which, for the non linear counterpart, provides the result
	\begin{equation}
	F = \frac{f_0 e^{\int_0^{x} r(t')dt'}}{1+f_0\int_0^x e^{\int_0^{t'} r(t'')dt''}k(t')dt'},	
	\end{equation}
 where $r(t)$ and $k(t)$ are suitable positive values functions.
 An analogous result has been mentioned in ref. \cite{16}.  \\
 A similar procedure can be used to obtain the linear counterpart of  eq. (2.2). By taking $\tilde{F}= 1/E^n$ and introducing $Y = E^n$, we get indeed
 \begin{equation}
 	Y' = -\frac{r}{n}(Y-1).
 \end{equation}
 In the previous sections we have discussed logistic type functions and we have looked for a general criterion to determine the NL differential equation they satisfy. This point of view can be extended to LF exhibiting an explicit time dependence. 
 We have considered so far equations containing one variable only and therefore we have left out other forms of noticable importance in applications, like logistic-diffusive equations of the Fisher type \cite{16}. They model growth-saturation processes including also heat type diffusion contributions and write
 	\begin{equation}\label{rea}
 		\frac{\partial f}{\partial t}-\lambda \frac{\partial^2 f}{\partial x^2} = G(f),
 	\end{equation}
 where $G(f) = f(1-f^n)$. This is the diffusion-reaction equation corresponding to the previous model considering both space and time variables. 
 The relevant solutions are usually written in the travelling wave form, which can be worded as it follows.
 The solution of the non linear diffusive equation \eqref{rea} reads (see \cite{18a,18,18b})
 \begin{equation}
 	f(x,t) = A\tilde{F}^2 (\xi),
 \end{equation}
where $\xi = k(x-Vt)$ and $\tilde{F}$ is given by the solution of the equation (2.2). Furthermore $A$ is a constant to be determined, along with $k$ and $V$ are the wave vector and velocity respectively, whose dispersion relation is fixed by the structure of eq. \eqref{rea}. For this type of problems the complication arises from the fact that the initial constant and the other specifying the trial solution are entangled by non linear relationships \cite{17,18}.  
Regarding e.g. the case $G(u) = \mu u(1-u)$, the travelling wave solution reads 
\begin{equation}
	f(x,t) = \frac{1}{k^2}\left(\frac{1}{1+e^{-\xi}}\right)^2,
\end{equation}
where
\begin{equation}
	k = \pm \sqrt{\frac{\mu}{6\alpha}}, \quad V = \pm 5\alpha k. 
\end{equation}
The usefulness of logistic type functions has been recently been underscored for the modeling of high gain Free Electron Laser (FEL) devices \cite{19}. Within this context it has been suggested that the function modeling the evolution of the laser field complex amplitude is
\begin{equation}\label{fel}
	l(\tau) = l_0\frac{a(\tau)}{1+\frac{l_0}{|l_F|}(a(\tau)-1)},
\end{equation}
with $a(\tau)$ satisfying the third order ODE \cite{10}
\begin{equation}\label{FEL}
\widehat{F}_L a(\tau) =\left(\partial_{\tau}^3+2i\nu\partial^2_{\tau}-\nu^2\partial_{\tau}\right)a(\tau)= i\pi g_0 a(\tau),
\end{equation}
under the initial conditions
\begin{equation}\nonumber
a|_{\tau= 0} = 1, \quad \partial_{\tau}a|_{\tau= 0}= \partial_{\tau}a|_{\tau= 0}=0.
\end{equation}
It is worthless to comment here on the relevant physical meaning.
The importance of the logistic equation for the FEL theory has been recently discussed in \cite{19} and the phenomenological consequences of eq. \eqref{FEL} will be drawn elsewhere. 
The associated non linear equation is obtained by a slight extension of the previously outline procedure. We first write eq. \eqref{fel} as
\begin{equation}\label{fel3}
	a(\tau)  = A\frac{\tilde{l}(\tau)}{1-\tilde{l}(\tau)},
\end{equation}
where $\tilde{l}(\tau)= l(\tau)/l_F$ and $A = \frac{1}{\tilde{l}_0}-1$.
Then, by applyng the operator $\widehat{F}_L$ to both the sides of \eqref{fel3}
we obtain that
\begin{equation}
\tilde{l}'''+\tilde{l}''(6f+2i\nu)+(1-\tilde{l})(6f^3+4i\nu f^2-\nu^2 f-i\pi g_0 \tilde{l})=0,
\end{equation}
where 
$$f \equiv f(l) = -\frac{d}{d\tau}\ln(\tilde{l}-1).$$
The non linear equation we have derived is, apparently, quite cumbersome. However it contains some interesting element allowing further speculations on the FEL saturation mechanisms.\\
In this article we have discussed a few interesting problems whose study are worth to be pursued. In a forthcoming note we will discuss how the present analysis can be extended to logistic equations including extravariables and how the nonlinear Laguerre reaction-diffusion can be framed in a wider context.

\end{document}